\documentclass[10pt, a4paper]{article}
\usepackage[margin=1in]{geometry}
\usepackage{graphicx}
\usepackage[]{amsmath}
\usepackage{amsthm}
\usepackage{amsfonts}
\usepackage[utf8]{inputenc}
\usepackage{xcolor}
\usepackage{amssymb}
\usepackage{hyperref}
\usepackage{fullpage}
\graphicspath{ {./images/} }

\newtheorem{theorem}{Theorem}
\newtheorem{lemma}[theorem]{Lemma}
\newtheorem{definition}[theorem]{Definition}
\theoremstyle{remark}
\newtheorem{remark}[theorem]{Remark}
\newtheorem{claim}{Claim}

\newcommand*{\E}{\ensuremath{\mathbb{E}}}

\title{Randomly perturbed digraphs also have bounded-degree spanning trees}
\author{
Patryk Morawski\thanks{Department of Computer Science, ETH, 8092 Z\"urich, Switzerland.
			Email: \href{mailto:pmorawski@student.ethz.ch}{\nolinkurl{pmorawski@student.ethz.ch}}. This author was supported by a scholarship of the Studienstiftung des Deutschen Volkes.}
\and Kalina Petrova\thanks{Institute of Science and Technology Austria (ISTA), Austria. This research was conducted while the author was at Department of Computer Science, ETH Z\"urich, Switzerland.
			Email: \href{mailto:kalina.petrova@inf.ethz.ch}{\nolinkurl{kalina.petrova@ist.ac.at}}. This author was supported by grant no. CRSII5 173721 of the Swiss National Science Foundation.}}
\date{}

\begin{document}
\maketitle
\begin{abstract}
    We show that a randomly perturbed digraph, where we start with a dense digraph $D_{\alpha}$ and add a small number of random edges to it, will typically contain a fixed orientation of a bounded-degree spanning tree.
    This answers a question posed by Araujo, Balogh, Krueger, Piga and Treglown and generalizes the corresponding result for randomly perturbed graphs by Krivelevich, Kwan and Sudakov.
    More specifically, we prove that there exists a constant $c = c(\alpha, \Delta)$ such that if $T$ is an oriented tree with maximum degree $\Delta$ and $D_\alpha$ is an $n$-vertex digraph  with minimum semidegree $\alpha n$, then the graph obtained by adding $cn$ uniformly random edges to $D_\alpha$ will contain $T$ with high probability.
\end{abstract}

\section[short]{Introduction}
Given two graphs $H$ and $G$ with the same number of vertices $n$, a natural question to ask is whether $G$ contains a copy of $H$.
In general, this question is however computationally hard to answer.
Therefore, we might look for conditions that imply existence of a particular subgraph in a host graph $G$.
For example, a classical result of Dirac \cite{dirac} states that if $G$ has minimum degree at least $n/2$, then it has a Hamilton cycle: a cycle going through all of its vertices.
More recently, similar results have been obtained for different classes of spanning subgraphs.
For instance, Komlós, Sárközy and Szemerédi \cite{komlós_sárközy_szemerédi} proved that for any $\Delta$, $\epsilon > 0$ and $n$ large enough, any graph on $n$ vertices with minimum degree $(1/2 + \epsilon)n$ contains every spanning tree with maximum degree at most $\Delta$. This was further strengthened in two different ways --- the maximum degree condition on the tree was improved to $c n/\log n$~\cite{kss2001}, and the minimum degree condition on the host graph for constant-degree trees was relaxed to $n/2+c\log n$~\cite{csaba2010}, both of which are best possible up to the constant $c$. Similar minimum degree conditions have also been studied for different notions of spanning trees in hypergraphs~\cite{pavez-signe_dirac-type_2021,pehova2023minimum}.

These Dirac-type theorems impose very strong density conditions on the host graph and the constant $1/2$ is in fact tight.
Therefore, we might ask ourselves what the situation is in a ``typical'' graph with $n$ vertices and $m$ edges.
We consider a random graph $R \sim G(n, p)$ on $n$ vertices where each edge is included independently and with probability $p$.
Here, it turns out that we can get away with a much lower density of the edges and still typically get a Hamiltonian graph.
A result by P\'osa \cite{posa_hamiltonian_1976} and independently by Korshunov~\cite{korshunov1976solution} states that if $p \geq C\log n / n$ for some $C>0$, such a random graph is w.h.p.\footnote{With high probability (w.h.p.) means with probability tending to $1$ as $n \rightarrow \infty$} Hamiltonian. The threshold for Hamiltonicity  in the binomial random graph has since then been determined more precisely~\cite{komlos1983limit}, and stronger \emph{hitting-time} results have also been shown~\cite{ajtai1985first,bollobas1984evolution}.
More recently, Montgomery \cite{montgomery2014embedding,montgomery_spanning_2019} proved that for any $\Delta > 0$ and a spanning tree $T$ with maximum degree $\Delta$, there exists a constant $c = c(\Delta)$ such that the random graph $R \sim G(n, c\log n / n)$ contains $T$ with high probability.
The exact behaviour of the constant $c$  is still unknown, except for restricted classes of trees \cite{hefetz_spanning_threshold, montgomery2014sharp}.

In \cite{bohman_perturbed}, Bohman, Frieze and Martin introduced a different random graph model which offers a middle way perspective between the deterministic and probabilistic settings introduced above.
It starts with an arbitrary dense $n$-vertex graph $G_\alpha$ with minimum degree at least $\alpha n$ for some constant $\alpha >0$ and adds relatively few random edges to it.
The graph $G_\alpha$ itself might be far away from having the desired property and the underlying question is how large the edge probability $p$ must be for the \emph{randomly perturbed graph} $G = G_\alpha \cup G(n,p)$ to possess the given property with high probability.
For example, in their paper~\cite{bohman_perturbed}, Bohman, Frieze and Martin showed that there exists a constant $c = c(\alpha)$ such that the resulting graph $G = G_\alpha \cup G(n, c/n)$ is Hamiltonian with high probability.
Similarly, Krivelevich, Kwan and Sudakov \cite{krivelevich_bounded-degree_2016} proved that for each $0 < \alpha < 1/2$ and $\Delta > 0$ there exists a constant $c = c(\alpha, \Delta)$ such that if $T$ is a tree with maximum degree $\Delta$ then w.h.p. $G_\alpha \cup G(n, c/n)$ contains $T$.
This result was further improved by Böttcher, Han, Kohayakawa, Montgomery, Parczyk, and Person~\cite{undirected_spanning_universality} who showed that there exists $c = c(\alpha, \Delta)$ such that w.h.p. $G_\alpha \cup G(n, c/n)$ contains all spanning trees with maximum degree at most $\Delta$ simultaneously. Another extension of the result by Krivelevich, Kwan and Sudakov was achieved by Joos and Kim~\cite{joos2020spanning}, who pinned down the order of magnitude of the necessary number of random edges required for spanning trees of maximum degree up to $cn/\log{n}$. Some interesting results on powers of Hamilton cycles and general bounded-degree graphs in randomly perturbed graphs were proved in~\cite{bottcher2020embedding}.

Similar questions were asked for \emph{directed graphs}, or \emph{digraphs}, where the minimum degree condition is replaced with a minimum semidegree one.
For a digraph $D = (V, E)$, its \emph{minimum semidegree} is the minimum of all in- and out-degrees of vertices of $D$, i.e.,
\[ \delta^0(D) := \min_{v\in V(D)} \min\{ \delta^+(v), \delta^-(v) \},\]
where $\delta^+(v)$ is the out-degree of $v$ and $\delta^-(v)$ is the in-degree of $v$.
We call a digraph $D$ an \emph{oriented graph} if there is at most one edge between any pair of vertices or equivalently if it can be obtained from an undirected graph by giving an orientation to each of its edges.
As a consequence of a result by Ghouila-Houri~\cite{ghouilahouri1960condition}, we know that every digraph with minimum semidegree at least $n/2$ contains a consistently oriented Hamilton cycle, and more recently, DeBiasio, Kühn, Molla, Osthus and Taylor \cite{debiasio_all_orientations} showed that a minimum semidegree of $n/2 + 1$\footnotemark enforces the existence of every orientation of the Hamilton cycle in any digraph.
In 2019, Mycroft and Naia \cite{mycroft_spanning_2019} showed that, similarly to the undirected case, for any $\Delta$, $\epsilon$ and for any $n$ large enough, every $n$-vertex digraph with minimum semidegree at least $(1/2 + \epsilon) n$ contains every oriented tree with maximum total degree at most $\Delta$, where the total degree of a vertex is the sum of its in-degree and out-degree. This result was strengthened in a manner again analogous to the undirected setting by Kathapurkar and Montgomery~\cite{kathapurkar22}, who proved that the maximum total degree condition on the tree can be relaxed to $cn/\log{n}$ for some $c>0$. A corresponding counting result was recently shown~\cite{joos2023counting}.

\footnotetext{
In fact, they showed that a digraph with minimum semidegree at least $n/2$ must contain every orientation of a Hamilton cycle except for the anti-directed Hamilton cycle, where the edges are oriented clockwise and anti-clockwise alternately.
The existence of an anti-directed Hamilton cycle can be guaranteed only by semidegree $n/2 + 1$.
}

In \cite{montgomery_spanning_2021}, Montgomery established the threshold for appearance of any fixed orientation of a Hamilton cycle in the binomial random digraph model $D(n, p)$, where each of the possible $2\binom{n}{2}$ edges is present independently and with probability $p$.
He showed that depending on the orientation the threshold may vary from $p = \log n /n$ to $p = 2 \log n /n$.
Moreover, by his observation in the same paper, which is a result of McDiarmid's coupling argument \cite{mcdiarmid_general_1983}, we know that for each $\Delta > 0$ and an oriented tree $T$ with maximum total degree $\Delta$ there exists a constant $c = c(\Delta)$ such that w.h.p. $D(n, c \log n/ n)$ contains $T$.

For the \emph{randomly perturbed digraph} model, where we start with a graph $D_\alpha$ with minimum semidegree $\alpha n$ and add random edges to it, Bohman, Frieze and Martin \cite{bohman_perturbed} showed that for every $\alpha > 0$ there exists $c = c(\alpha)$ such that w.h.p. $D_\alpha \cup D(n, c/n)$ contains a consistently oriented Hamilton cycle.
Later, Krivelevich, Kwan, and Sudakov~\cite{krivelevich2016cycles} showed that w.h.p. in $D_\alpha \cup D(n, c/n)$ one can find every consistently oriented cycle of length $2$ to $n$ and more recently, Araujo, Balogh, Krueger, Piga, and Treglown~\cite{araujo_oriented_2022} proved that the same model contains every orientation of a cycle of length $2$ to $n$, in both cases for some $c = c(\alpha)$.
In the same work, the latter set of authors also asked the question whether a similar statement holds for a fixed bounded-degree spanning tree in randomly perturbed digraphs.

In this paper, we answer this question in the positive by giving a short proof of the following theorem.
\begin{theorem}\label{theorem:maintheorem}
    For each $0 < \alpha <1$ and $ \Delta \in \mathbb{N}$, there exists a constant $c = c(\alpha, \Delta)$ such that if $D_\alpha$ is an $n$-vertex digraph with minimum semidegree at least $\alpha n$,
    and $R \sim D(n,c/n)$, and $T$ is an $n$-vertex oriented tree with maximum total degree at most $\Delta$, then w.h.p.\footnotemark $T \subseteq D_\alpha \cup R$.
\end{theorem}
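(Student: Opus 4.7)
The plan is to adapt Krivelevich--Kwan--Sudakov's approach for randomly perturbed undirected graphs to the directed setting via a structural dichotomy. A standard lemma gives a constant $\beta = \beta(\Delta) > 0$ such that every $n$-vertex tree of maximum degree at most $\Delta$ either has $\geq \beta n$ leaves or contains $\geq \beta n$ pairwise vertex-disjoint \emph{bare paths} of any fixed constant length $k$; since this dichotomy is orientation-agnostic, it applies to our oriented tree $T$. Split the random edges as $R = R_1 \sqcup R_2$ with each $R_i \sim D(n, c/(2n))$ independently --- we will use $R_1$ to embed a near-spanning subtree of $T$ and $R_2$ to finish the construction.

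\textbf{Many-leaves case.} By pigeonhole, at least $\beta n / 2$ leaves have the same orientation type; WLOG, take a set $L$ of $\beta n / 2$ out-leaves (each attached to its parent via an arc leaving the parent), and set $T_0 := T - L$. Choose a uniformly random set $B \subseteq V$ with $|B| = |L|$; by Chernoff, w.h.p.\ every vertex has $\Theta(n)$ out- and in-neighbors in $B$. First, embed $T_0$ into $A := V \setminus B$ using $D_\alpha \cup R_1$ --- call the embedding $\phi$. Second, extend $\phi$ to $L$ by constructing a system of distinct representatives on the bipartite digraph between $(\phi(p(\ell)))_{\ell \in L}$ and $B$, including an arc $\phi(p(\ell)) \to b$ whenever $D_\alpha \cup R_2$ contains such an arc. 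The semidegree condition on $D_\alpha$, combined with the random choice of $B$, ensures that each parent has $\Omega(n)$ admissible children in $B$, so Hall's condition is easily met.

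\textbf{Bare-paths case.} Pick $\beta n$ pairwise vertex-disjoint bare paths $P_1, \ldots, P_{\beta n}$ of length $k$ and contract each to a single arc, obtaining a smaller oriented tree $T_0$. Reserve a random set $B$ of size $\Theta(n)$ for internal vertices of the contracted paths, embed $T_0$ into $A := V \setminus B$ via $D_\alpha \cup R_1$, and then --- for each contracted arc $P_i$ --- use $D_\alpha \cup R_2$ to build a bare path of the correct oriented type of length $k$ between the two image endpoints, entirely through $B$. Since $k$ is constant and each endpoint has $\Omega(n)$ suitably oriented neighbors available in $B$, these paths can be constructed sequentially while greedily avoiding conflicts.

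\textbf{Main obstacle.} The central challenge is embedding $T_0$, which still has $(1 - O(\beta))n$ vertices, into a subgraph of semidegree possibly as low as $\alpha n$, well below the $(1/2 + \varepsilon) n$ threshold of classical tree-embedding theorems such as Mycroft--Naia. The plan is to run a Friedman--Pippenger-style greedy embedding whose required expansion is supplied by a mixture of $D_\alpha$'s density and random edges from $R_1$: wherever $D_\alpha$ alone fails to expand, the random edges fill in the gap. Quantifying this boost --- choosing $c = c(\alpha, \Delta)$ large enough that $R_1$ restores the needed expansion at every step while respecting orientations, and simultaneously ensuring $R_2$ suffices for the finishing step --- is the main technical work of the proof.
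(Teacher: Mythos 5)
Your proposal follows the leaves/bare-paths dichotomy of Krivelevich--Kwan--Sudakov, whereas the paper instead uses an absorption argument (\emph{absorbing stars}) that avoids the case analysis. That is a genuinely different route, and in the undirected setting the KKS route does work, so it is a legitimate plan. But you have left a genuine gap exactly where you flag it: your ``Main obstacle'' paragraph correctly identifies the hard step --- embedding the near-spanning $T_0$ --- and then proposes an unspecified Friedman--Pippenger-style argument combining $D_\alpha$'s density with random edges, ``quantifying this boost'' being deferred. This is not a routine detail; it is where the central idea must go, and as written the proof does not exist.

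The key observation you are missing --- used both by KKS and by this paper --- is that the near-spanning embedding step does \emph{not} need $D_\alpha$ at all. Because $T_0$ has only $(1-\Theta(\beta))n$ vertices, the theorem of Alon, Krivelevich and Sudakov already says $G(n,c/n)$ alone w.h.p.\ contains every bounded-degree tree on $(1-\epsilon)n$ vertices; one transfers this to $D(n,c/n)$ via McDiarmid's coupling (as observed by Montgomery), which the paper does in its Lemma~\ref{lemma:almostembedding}. There is no need to interleave $D_\alpha$'s expansion with the random edges, so your main obstacle disappears. Embedding into $R$ alone also yields the crucial symmetry property (Remark~\ref{remark:uniform}) that the resulting embedding is a uniformly random injection into $V(D_\alpha)$, which is precisely what makes the finishing step tractable --- whether by Hall's theorem on an SDR as you propose, or by absorption as the paper does. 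A secondary issue: your Hall's-condition claim is too quick. Individual parents having $\Omega(n)$ admissible children in $B$ does not by itself verify Hall's condition for \emph{subsets} of parents, and with $p=c/(2n)$ the random edges $R_2$ give each parent only $O(1)$ random neighbors in $B$; the actual KKS argument at this point is more delicate and uses the uniform-randomness of the image of $T_0$. The paper's absorption framework sidesteps this, at the cost of needing the martingale concentration argument of Lemma~\ref{lemma:enoughstars}.
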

\footnotetext{More formally, the theorem states that if we fix any pair of sequences $(T^{(n)})_{n \geq 1}$ and $(D_\alpha^{(n)})_{n \geq 1}$ such that $T^{(n)}$ is an $n$-vertex oriented tree with maximum total degree at most $\Delta$ and $D_\alpha^{(n)}$ is an $n$-vertex digraph with minimum semidegree at least $\alpha n$, then $\lim_{n \to \infty} \Pr[T^{(n)} \subseteq D_\alpha^{(n)} \cup D(n, c/n)] = 1$.}

The number of random edges added to $D_\alpha$ is tight up to the dependence of $c$ on $\Delta$ and $\alpha$, as the example of a digraph obtained by replacing every edge in the complete bipartite graph $K_{\alpha n, (1-\alpha)n}$ by two directed edges in both directions shows. The same extremal example also demonstrates why the minimum semidegree condition is best possible. Namely, if we allow $\alpha = o(1)$ and consider the larger part of the bipartite graph in question, there will be linearly many vertices of degree $0$ in $D(n,c/n)$. All of these would then need to be attached to the spanning tree via an edge to the other part of $K_{\alpha n, (1-\alpha)n}$, which does not have enough vertices for that --- it only has $o(n)$ many.

An open question remains whether one could allow $\Delta$ to modestly grow with $n$, just like is the case for minimum degree conditions in dense graphs and digraphs~\cite{kathapurkar22,kss2001}.
That would require significantly new ideas, since the embedding of most of the tree would no longer fit in the random edges at density $\frac{c}{n}$. Thus, one would have to also make use of the deterministic edges for that, which would interfere with the probabilistic argument for finding absorbing stars. 

\subsection*{Proof outline}

To prove Theorem \ref{theorem:maintheorem}, we will show that with high probability we can find an embedding of $T$ in $D$ in two steps.
First, we will fix a subtree $T'$ of $T$ with $(1-\epsilon)n$ vertices for some small but constant $\epsilon > 0$ and we will show that w.h.p. $T' \subseteq R$, i.e., that we can find an embedding of $T'$ in $D$ which uses only the random edges.
Then we will show that using the edges of $D_\alpha$ we can w.h.p. extend this embedding to an embedding of the whole tree $T$.
Our proof strategy works for both undirected and directed randomly perturbed graphs, the only technicality in the directed case being taking care of edge directions, which we will omit in this proof outline.

To show that w.h.p. $R$ contains the almost spanning tree $T'$, we will rely on the corresponding result for undirected graphs by Alon, Krivelevich and Sudakov \cite[Theorem 1.1]{alon_embedding_2007}. 
They showed that for each $\epsilon \in [0,1]$ and each $\Delta >0$ there exists a constant $c = c(\epsilon, \Delta)$ such that w.h.p. $G(n, c/n)$ contains every tree on $(1-\epsilon)n$ vertices with maximum degree at most $\Delta$.
Using a consequence of the coupling argument by McDiarmid \cite{mcdiarmid_general_1983}, as observed by Montgomery \cite{montgomery_spanning_2021}, we will be able to get that w.h.p.  there exists an embedding $\phi'$ of $T'$ in $ R$ where $R \sim D(n, c/n)$ for the same choice of $c$.
\begin{figure}
    \centering
    \includegraphics[width=0.55\textwidth]{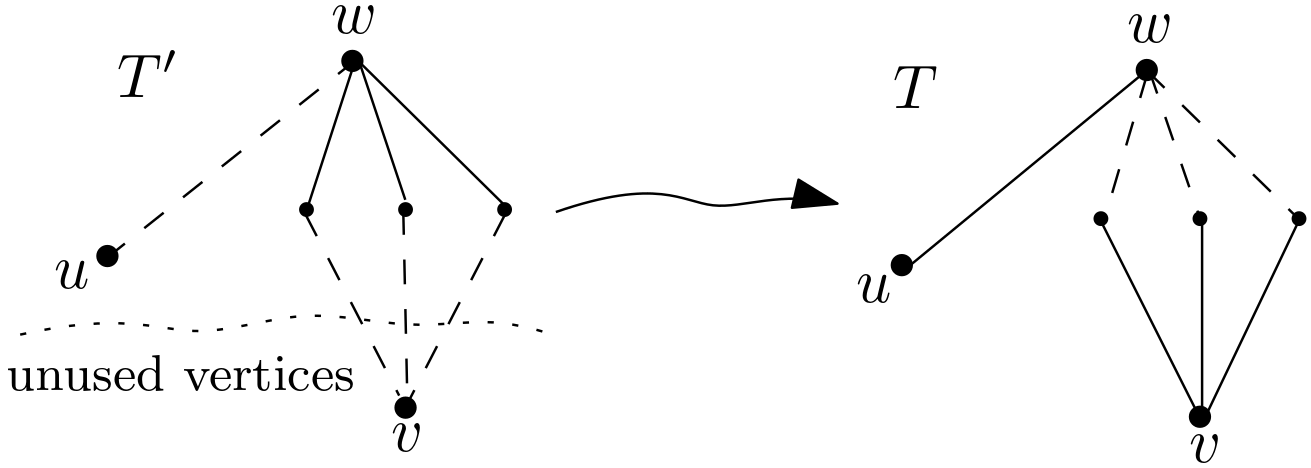}
    \caption{An illustration of the absorbtion strategy used. We wish to extend our tree by an edge with an endpoint in $u$. To achieve that we pick an unused vertex $v$ as well as a vertex $w$ that is part of our tree and such that $v$ shares all the tree-neighbors of $u$. To extend the tree, we let $v$ take over the role of $w$ and $uw$ be the new edge.}
    \label{fig:absobtion}
\end{figure}
To extend $\phi'$ to an embedding $\phi$ of the whole tree in $D$, we will use an absorption strategy inspired by the one developed for undirected graphs in \cite{bottcher2020embedding} and later also employed for hypergraphs in \cite{pavez-signe_dirac-type_2021,pehova2023minimum}.
We will extend the embedding of $T$ edge by edge.
In each step, we will create a new edge with one endpoint in some vertex $u$ that is already part of our embedding.
For it, we will pick some vertex $w$ that is still not used by the embedding and find a vertex $v$ such that $w$ shares all the relevant neighbors of $v$ and there is an edge between $v$ and $u$.
This will allow $w$ to take over the role of $v$ while the edge between $u$ and $v$ will serve as the new edge in our embedding.
An illustration of this process can be found in Figure \ref{fig:absobtion}.
We call such a structure an \emph{absorbing star} with center $v$ for the pair $u$, $w$.

We will then show that w.h.p.\hspace{0.35em}enough such absorbing stars exist for each pair under $\phi'$, which will guarantee that in each step we will be able to find such a vertex $v$. For this, inspired by the techniques used for the undirected case in~\cite{bottcher2020embedding,undirected_spanning_universality}, we will employ the randomness inherent to our host graph in the following way.
We will first fix a collection of stars in our tree and then show that w.h.p. they are nicely distributed in $D_\alpha \subseteq D$.
Here, we will make use of the observation by \cite{krivelevich_bounded-degree_2016} that by symmetry, since $\phi'$ only uses the random edges of $R$, we can assume it is a uniformly random injective mapping of the vertices of $T'$ into the vertices of $D_{\alpha}$.
Under this assumption, we will first show that the expected number of absorbing stars for each pair of vertices is large.
Then, using the Azuma-Hoeffding inequality on a martingale defined by exposing the positions of the structures in the host graph one by one, we will show that for each pair this number is highly concetrated around its expectation and thus, w.h.p. we will be able to find enough such structures.

\vspace{10pt}

The proof of Theorem \ref{theorem:maintheorem} is organized as follows.
In Section \ref{section:almostembedding} we will show that w.h.p. we can embed all but a bounded small proportion of $T$ in $R$, relying on the result in~\cite{alon_embedding_2007} for undirected trees in binomial random graphs.
In Section \ref{section:absorption} we will present the absorption strategy we use to complete an embedding of an oriented tree.
Then, in Section \ref{section:manystars} we will show that w.h.p. the conditions needed for the method to work are satisfied after embedding almost all of $T$ in $R$ and thus we will be able to extend our embedding to the whole $T$.
Finally, in Section \ref{section:proofofmain} we put all our results together to prove Theorem \ref{theorem:maintheorem}.

\section{Preliminaries}
\subsection*{Notation}
Throughout this paper we omit floors and ceilings whenever it does not affect the argument.
We write $n^{\underline{i}}$ for $\frac{n!}{(n-i)!}$.
Given a digraph $D$, we write $V(D)$ for the set of vertices and $E(D)$ for the set of (directed) edges of $D$.
For $v \in V(D)$, we denote by $N_D^+(v) = \{u | (v, u) \in E(D)\}$ the set of out-neighbors of $v$ and by $N_D^-(v) = \{u | (u, v) \in E(D)\}$ the set of in-neighbours of $v$.
The underlying graph $G$ of $D$ is the undirected (multi-)graph obtained by replacing each directed edge $(v, w)$ in $D$ with an undirected edge $\{v, w\}$.
A directed graph $D$ is an (oriented) tree iff its underlying graph is a tree.

We call an injective mapping $\phi: V(H) \to V(D)$ an \emph{embedding} of $H$ in $D$ if for any $(v, w) \in E(H)$, we have that $(\phi(v), \phi(w)) \in E(D)$.
We denote by $Im(\phi) = \{ \phi(x) | x \in V(H)\}$ the image of $H$ and write $\phi(S) = \{\phi(x) | x \in S \}$ for $S \subseteq V(H)$.
Note that $H \subseteq D$ iff there exists an embedding of $H$ into $D$.

Let $T$ be an oriented tree.
For $S \subseteq E(T)$ we write $V(S) = \{u,v | (u,v) \in S \}$ for all the vertices that are incident to some edge in $S$.
We call an ordering $e_1,\dots, e_{n-1}$ of edges of $T$ a \emph{valid ordering} if, for all $i \in [n-2]$, we have $|V(\{e_1, \dots, e_i\}) \cap V(\{e_{i+1}\})| = 1$.
It is easy to see that every oriented tree has a valid ordering of its edges.

\subsection*{Martingales and the Azuma-Hoeffding inequality}
In the proof of Lemma \ref{lemma:enoughstars}, to prove that the probability for certain bad events is sufficiently small we will use the Azuma-Hoeffding inequality for martingales. 
Intuitively, a (discrete-time) martingale is a sequence of random variables $(M_t)_{t \geq 0} \in \mathbb{R}$ where $M_t$ is a state of a given process at time $t$.
Moreover, knowing the state $M_t$ of the process at time $t \geq 0$, the expected value of the next state must be equal to $M_t$, i.e., $\E[M_{t+1} | M_1, \dots, M_t] = M_t$.

For our purposes, it will be enough to look at the process of exposing more and more information that a given random variable $X$ depends on.
More formally, let $(\Omega, \mathcal{F}, \Pr)$ be a probability space and $X$ be a random variable on that probability space.
Moreover, let $(\mathcal{F}_t)_{t \geq 0} $ be a sequence of $\sigma$-algebras such that for all $i \geq 0$ it holds that $\mathcal{F}_i \subseteq \mathcal{F}_{i+1} \subseteq \mathcal{F}$, i.e., each subsequent $\mathcal{F}_i$ contains more information than the previous one.
Then we call $(\mathcal{F}_t)_{t \geq 0}$ a \emph{filtration} and the sequence $(\E[X | \mathcal{F}_t])_{t \geq 0}$ is a \emph{martingale}.

We will specifically look at martingales $(M_t)_{t \geq 1}$ such that the change in each step, i.e., $|M_{t+1} - M_{t}|$ for all $t\geq 1$, can be almost surely\footnote{Almost surely means with probability 1.} bounded by some constant $L$.
In this case, the Azuma-Hoeffding inequality will allow us to bound the probability that after taking $N$ steps our process changes by a lot, i.e., that $|M_0 - M_N|$ is large.

\vspace{10px}
\begin{theorem}[Azuma-Hoeffding inequality]\cite{azuma, hoeffding}\label{thm:azuma}
        Suppose $(M_t)_{t\geq 0}$ is a martingale and for all $t\geq 1$ it holds that $|M_t - M_{t-1}| \leq L$ almost surely.
        Then for all positive integers $N$ and any $\epsilon > 0$,
        \[ \Pr[|M_N - M_0| \geq \epsilon] \leq 2\exp \Bigg(\frac{-\epsilon^2}{2NL^2} \Bigg). \]
\end{theorem}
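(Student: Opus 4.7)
The plan is to use the standard Chernoff/exponential-moment argument. First, I would set $X_t := M_t - M_{t-1}$ for $t \geq 1$, so that the martingale property $\E[M_t \mid \mathcal{F}_{t-1}] = M_{t-1}$ yields $\E[X_t \mid \mathcal{F}_{t-1}] = 0$, and the assumption gives $|X_t| \leq L$ almost surely. Writing $M_N - M_0 = \sum_{t=1}^N X_t$ and applying Markov's inequality to $e^{\lambda(M_N - M_0)}$ for any $\lambda > 0$ gives
\[ \Pr[M_N - M_0 \geq \epsilon] \leq e^{-\lambda \epsilon}\, \E\bigl[e^{\lambda(M_N - M_0)}\bigr]. \]

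Next I would control the moment generating function on the right by peeling off one martingale difference at a time via the tower property:
\[ \E\bigl[e^{\lambda \sum_{t=1}^N X_t}\bigr] = \E\Bigl[e^{\lambda \sum_{t=1}^{N-1} X_t}\cdot \E\bigl[e^{\lambda X_N} \mid \mathcal{F}_{N-1}\bigr]\Bigr]. \]
The conditional MGF is bounded using Hoeffding's lemma: if $X$ satisfies $\E[X]=0$ and $|X|\leq L$ almost surely, then $\E[e^{\lambda X}] \leq e^{\lambda^2 L^2/2}$. Since $X_t$ has conditional mean zero given $\mathcal{F}_{t-1}$ and is conditionally bounded by $L$, the lemma applies conditionally and gives $\E[e^{\lambda X_t}\mid\mathcal{F}_{t-1}] \leq e^{\lambda^2 L^2/2}$ pointwise. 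Iterating $N$ times yields $\E[e^{\lambda(M_N-M_0)}] \leq e^{N\lambda^2 L^2/2}$, and hence
\[ \Pr[M_N - M_0 \geq \epsilon] \leq \exp\!\bigl(-\lambda\epsilon + N\lambda^2 L^2/2\bigr). \]
Optimizing over $\lambda>0$ by choosing $\lambda = \epsilon/(NL^2)$ gives the one-sided tail bound $\exp(-\epsilon^2/(2NL^2))$. The lower tail follows by the same argument applied to the martingale $(-M_t)_{t\geq 0}$, which has the same bounded-difference property, and a union bound over the two tails produces the factor of $2$ in the statement.

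The only real obstacle is Hoeffding's lemma itself; everything else (Markov, conditioning, optimization, union bound) is routine. I would prove the lemma by convexity of $x \mapsto e^{\lambda x}$: on $[-L,L]$ one has $e^{\lambda x} \leq \tfrac{L-x}{2L}e^{-\lambda L} + \tfrac{L+x}{2L}e^{\lambda L}$, and taking expectations (using $\E[X]=0$) gives $\E[e^{\lambda X}] \leq \cosh(\lambda L)$. The estimate $\cosh(\lambda L) \leq e^{\lambda^2 L^2/2}$ then follows from comparing Taylor series term by term, since $(2k)! \geq 2^k k!$ for all $k \geq 0$. With this in hand, the rest of the proof assembles without any difficulty.
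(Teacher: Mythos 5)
Your proof is correct: the exponential-moment argument with a conditional application of Hoeffding's lemma, optimization at $\lambda = \epsilon/(NL^2)$, and a union bound over the two tails is exactly the classical proof of the Azuma--Hoeffding inequality. The paper does not prove this statement itself but simply cites it, and your argument matches the standard one found in the cited sources, so there is nothing further to reconcile.
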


In particular, in case $M_t = \E[X | \mathcal{F}_t]$, $\mathcal{F}_0 = \{ \emptyset, \Omega \}$ and $\sigma(X) \subseteq \mathcal{F}_N$ we have that $M_0 = \E[X]$ and $M_N = X$, so the inequality will allow us to bound the probability that $X$ deviates a lot from its expectation.
In the following, for a sequence $(\phi_t)_{t\geq 0}$ of random variables such that $(\sigma(\phi_t))_{t\geq 0}$ is a filtration, we will write $M_t = \E[X | \phi_t]$ for the martingale $M_t = \E[X | \sigma(\phi_t)]$.

\section{Finding an almost embedding}\label{section:almostembedding}

In this section, we will find an embedding of a large fraction of a given tree $T$ in $D \cup R$ using only the random edges of $R$.
In the later sections we will then show how to extend this embedding to an embedding of the whole tree. 

In other words, we will prove the following lemma.

\begin{lemma}[Embedding an almost spanning tree]\label{lemma:almostembedding}
    For all $\epsilon >0$ and $\Delta \in \mathbb{N}$, there exists $c = c(\epsilon, \Delta)$ such that the following holds.
    Let $T'$ be an oriented tree of maximum total degree at most $\Delta$ on $(1-\epsilon)n$ vertices.
    Then w.h.p., there exists an embedding $\phi$ of $T'$ into $R \sim D(n, c/n)$.
\end{lemma}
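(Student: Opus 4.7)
The plan is to reduce this directed almost-spanning result to its undirected counterpart, namely Theorem 1.1 of Alon, Krivelevich and Sudakov~\cite{alon_embedding_2007}, and then transfer the conclusion from the binomial random graph to the binomial random digraph by means of McDiarmid's coupling argument~\cite{mcdiarmid_general_1983}, as observed by Montgomery~\cite{montgomery_spanning_2021}.

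First, let $T''$ denote the underlying undirected tree of $T'$. Since $T'$ is an oriented tree, $T''$ has the same $(1-\epsilon)n$ vertices, and the degree of each vertex in $T''$ equals its total degree in $T'$, so $T''$ has maximum degree at most $\Delta$. By the Alon--Krivelevich--Sudakov theorem, there exists $c_0 = c_0(\epsilon, \Delta)$ such that w.h.p.\ $G(n, c_0/n)$ contains every undirected tree on $(1-\epsilon)n$ vertices of maximum degree at most $\Delta$; in particular, w.h.p.\ it contains $T''$.

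Next, I would invoke McDiarmid's coupling. The key feature this coupling exploits is that the law of $R \sim D(n, c/n)$ is invariant under permutations of $V(R)$, so one may introduce a uniformly random relabeling of the vertices and decouple the undirected \emph{skeleton} of the embedding from the specific orientations of its edges. Applying this to the setting above, for a suitable constant $c = c(\epsilon, \Delta)$ (a constant multiple of $c_0$ suffices in the standard version of this argument) the existence, w.h.p., of an embedding of $T''$ in the underlying undirected graph of $R$ can be upgraded to the existence of an embedding of $T'$ in $R$ itself.

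The main (mild) technical point is the coupling step. A naive ``first embed $T''$, then independently check orientations'' argument would lose a factor of $2^{-|E(T')|}$, which is fatal. McDiarmid's coupling avoids this by averaging over the random relabeling of $V(R)$, so that the cost of simultaneously matching all $|E(T')|$ orientations is absorbed into the constant $c$ rather than paid multiplicatively per edge. Once the coupling is set up, the lemma follows almost immediately from \cite{alon_embedding_2007}.
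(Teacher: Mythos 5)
Your proposal uses the same two ingredients as the paper---the Alon--Krivelevich--Sudakov almost-spanning-tree theorem and the McDiarmid coupling as packaged by Montgomery---and the overall reduction is correct. One imprecision worth flagging: the coupling does not work via a ``random relabeling of $V(R)$.'' The actual argument introduces the auxiliary model $D^*(n,p)$ in which, for each unordered pair $\{u,v\}$, \emph{both} directed edges are present (with probability $p$) or absent together; in $D^*(n,p)$ every embedding of the underlying undirected tree automatically realizes the desired orientation, so $\Pr[T' \subseteq D^*(n,p)] = \Pr[T'' \subseteq G(n,p)]$. McDiarmid's coupling then gives $\Pr[T' \subseteq D(n,p)] \geq \Pr[T' \subseteq D^*(n,p)]$ directly. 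In particular, the same constant $c_0$ from the undirected theorem works verbatim; you do not need to pay even a constant-factor increase in $c$, contrary to what your write-up suggests.
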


To prove the lemma, we will translate the corresponding result for undirected graphs \cite[Theorem 1.1]{alon_embedding_2007} using the observation by Montgomery \cite[Theorem 3.1]{montgomery_spanning_2021} which is a consequence of McDiarmid's beautiful coupling argument \cite{mcdiarmid_general_1983}.

Let $D^*(n, p)$ denote a digraph where each possible pair of edges $(u,v)$ and $(v,u)$ is included together, independently from other edges, with probability $p$. 
Clearly, the probability that $R^* \sim D^*(n,p)$ contains an oriented tree $T$ is equal to the probability that $G \sim G(n, p)$ contains the underlying graph of $T$.
Moreover, the following observation by Montgomery will give us a tool to translate the result for $R^* \sim D^*(n, p)$ to $R \sim D(n,p)$.
\begin{lemma}\cite{araujo_oriented_2022,mcdiarmid_general_1983}\label{lemma:coupling}
    Let $\mathcal{H}$ be a collection of oriented graphs. Then,
    \[\Pr[\exists H \in \mathcal{H}: H \subseteq D(n, p)] \geq \Pr[\exists H \in \mathcal{H}: H \subseteq D^*(n, p)].\]
\end{lemma}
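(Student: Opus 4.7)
The plan is to prove the lemma by a pair-by-pair interpolation argument between $D^*(n,p)$ and $D(n,p)$.  Fix any ordering of the $\binom{n}{2}$ unordered pairs of vertices of $[n]$, and for $0 \leq k \leq \binom{n}{2}$ let $\mu_k$ be the distribution on digraphs under which each of the first $k$ pairs $\{u,v\}$ has two independent $\mathrm{Ber}(p)$ indicators for its two opposite directed edges, while each of the remaining pairs is ``linked'' (both $(u,v)$ and $(v,u)$ present with probability $p$, and neither with probability $1-p$), all pairs being mutually independent.  Then $\mu_0 = D^*(n,p)$ and $\mu_{\binom{n}{2}} = D(n,p)$, so it suffices to show $\Pr_{\mu_k}[A] \leq \Pr_{\mu_{k+1}}[A]$ for every $k$, where $A := \{D : \exists H \in \mathcal{H},\ H \subseteq D\}$.

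Fix $k$ and let $\{u,v\}$ be the $(k+1)$-th pair.  Condition on the states of all \emph{other} pairs and call the resulting partial digraph $D_{\mathrm{other}}$; by construction its distribution is identical under $\mu_k$ and $\mu_{k+1}$.  Conditional on $D_{\mathrm{other}}$, the indicator of $A$ becomes a function $f\colon\{0,1\}^2 \to \{0,1\}$ of the state $(x_{uv}, x_{vu})$ of the $(k+1)$-th pair, namely $f(x_{uv}, x_{vu}) = 1$ iff some $H \in \mathcal{H}$ admits an embedding $\phi$ with $\phi(H) \subseteq D_{\mathrm{other}} \cup \{(u,v):x_{uv}=1\} \cup \{(v,u):x_{vu}=1\}$.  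A direct computation gives
\[
  \E_{\mathrm{indep}}[f] - \E_{\mathrm{coup}}[f] \;=\; p(1-p)\bigl[f(1,0) + f(0,1) - f(0,0) - f(1,1)\bigr],
\]
so it suffices to show $f(1,0) + f(0,1) \geq f(0,0) + f(1,1)$ for every such $f$ that can arise.

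Since $A$ is monotone increasing in the digraph, $f$ is a monotone increasing function on $\{0,1\}^2$, and a short case check shows that the \emph{only} monotone such $f$ violating the required inequality is the one with $f(0,0)=f(1,0)=f(0,1)=0$ and $f(1,1)=1$, i.e.\ the ``both required'' function.  This is precisely where the orientedness of $\mathcal{H}$ enters: if $f(1,1)=1$ is witnessed by an embedding $\phi$ of some $H \in \mathcal{H}$ with $\phi(H) \subseteq D_{\mathrm{other}} \cup \{(u,v),(v,u)\}$, then because $H$ is oriented, $\phi(H)$ uses at most one of the two opposite edges at $\{u,v\}$, so $\phi(H)$ is already contained in $D_{\mathrm{other}}$ together with at most one of $(u,v), (v,u)$.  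Hence at least one of $f(0,0), f(1,0), f(0,1)$ already equals $1$, ruling out the forbidden case.  Integrating over $D_{\mathrm{other}}$ yields $\Pr_{\mu_k}[A] \leq \Pr_{\mu_{k+1}}[A]$, and chaining across $k$ gives the lemma.

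The only delicate step is this exclusion of the ``both required'' function; without the orientedness hypothesis a collection $\mathcal{H}$ containing a single $2$-cycle would immediately furnish a counterexample (one checks $\Pr[\text{2-cycle} \subseteq D(n,p)] = p^2 < p = \Pr[\text{2-cycle} \subseteq D^*(n,p)]$), so the oriented assumption on $\mathcal{H}$ is used in an essential way.
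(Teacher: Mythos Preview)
The paper does not give its own proof of this lemma; it is simply quoted from the literature (the observation is attributed to McDiarmid's coupling argument and its formulation in the cited works), so there is nothing to compare against directly.

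Your argument is correct and is precisely McDiarmid's interpolation proof specialised to this setting. The computation of $\E_{\mathrm{indep}}[f]-\E_{\mathrm{coup}}[f]$ is right, the case analysis on monotone Boolean functions on $\{0,1\}^2$ is complete, and the key use of orientedness---that any embedding of an oriented $H$ uses at most one of the two antiparallel edges on a fixed pair, so $f(1,1)=1$ forces one of $f(0,0),f(1,0),f(0,1)$ to equal $1$---is exactly the point. One cosmetic remark: your 2-cycle counterexample is stated with the probabilities $p^2$ versus $p$, which are the values for $n=2$ (or for a fixed pair); for general $n$ the relevant comparison is $1-(1-p^2)^{\binom{n}{2}}$ against $1-(1-p)^{\binom{n}{2}}$, but the inequality and the conclusion are unchanged.
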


Putting those results together will yield us a proof for our lemma.

\begin{proof}[Proof of Lemma \ref{lemma:almostembedding}]

As shown in \cite[Theorem 1.1]{alon_embedding_2007}, there exists $c=c(\epsilon, \Delta)$ such that $G(n, c/n)$ w.h.p. contains every (undirected) tree on $(1-\epsilon)n$ vertices and maximum degree at most $\Delta$.
In particular, for the same choice of $c$, we have that w.h.p. $T' \subseteq D^*(n, c/n)$.
Therefore, initializing Lemma \ref{lemma:coupling} with $\mathcal{H} = \{T' \}$ together with our previous observations yields Lemma \ref{lemma:almostembedding}.

\end{proof}

\begin{remark}\label{remark:uniform}
    In the following sections, we will condition on the event that such an embedding $\phi$ in fact exists.
    As observed by \cite[Remark 3]{krivelevich_bounded-degree_2016}, since $\phi$ only uses the random edges of $R = D(n, p)$, we can assume that $\phi$ is a uniformly random injective mapping of $V(T')$ into $V(D_\alpha)$.
    In particular, for any $S \subseteq V(T')$, $Im(S)$ is a uniformly random subset of $V(D_\alpha)$ of size $|S|$.
    Moreover, conditioned on $\phi(S)$ for some $S \subseteq V(T')$, the embedding $\phi|_{V(T')\setminus S}$ is still a uniformly random injective mapping of $V(T')\setminus S$ into $V(D_\alpha) \setminus Im(S)$.
    We will use this observation heavily in Section \ref{section:manystars}, where we will show that certain substructures of $T'$ are with high probability nicely distributed in our graph $D_\alpha$.
\end{remark}

\section{Completing the embedding}\label{section:absorption}
In this section, we will present the absorption technique, which will allow us to extend an embedding of an almost spanning tree to an embedding of a spanning tree.
To achieve it, let us start with some relevant definitions.
\begin{definition}
    A \emph{star} $S_v = (v, S^+, S^-)$ consists of edges $(v, u)$ for $u \in S^+$ and $(u,v)$ for $u \in S^-$.
    We say that $v$ is the \emph{center} of the star $S_v$.
\end{definition}

\begin{definition}
    Let $T$ be a tree, $D$ a directed graph and $\phi: V(T) \to V(D)$ an injective mapping.
    For $u, w \in V(D)$ and $* \in \{+, -\}$, we say that a star  $S_v = (v, S^+, S^-)$ in $T$ is $(u,*,w)$-absorbing under $\phi$ if:
    \begin{itemize}
        \item $\phi(v) \in N_D^*(u)$,
        \item $S^+ = N_T^+(v)$ and $S^- = N_T^-(v)$, \emph{and}
        \item $\phi(S^+) \subseteq N_D^+(w)$ and $\phi(S^-) \subseteq N_D^-(w)$.
    \end{itemize}
    We write $\mathcal{A}_\phi(u,*,w)$ for the set of all $(u,*,w)$-absorbing stars in $T$ under $\phi$.
\end{definition}
Intuitively, the conditions for an absorbing star will allow us to extend $\phi$ to an embedding of a larger tree,
so that $w$ takes over the role of $\phi(v)$ in the embedding while the new edge between $u$ and $\phi(v)$ is created.
This is formalized in the following lemma, which shows how to complete an embedding of almost all of $T$ in $D \cup R$,
given that enough absorbing stars exist for each pair of vertices. 

\begin{lemma}[Absorbing Lemma]\label{lemma:absorbing}
    Let $n\in \mathbb{N}$ and $T$ be an $n$-vertex oriented tree with a valid edge ordering $e_1, \dots, e_{n-1}$, $i \in [n-2]$ and $T_0 = \{ e_1, \dots, e_{n-1 - i}\}$ be a subtree of $T$ on $n-i$ vertices.
    Let $D$ be an $n$-vertex digraph and let $\phi_0$ be an embedding of $T_0$ into $D$.
    Suppose there exists a family $\mathcal{S}$ of vertex-disjoint stars in $T_0$ such that for all  $ u,w \in V(D), * \in \{+, -\}$, it holds that
    \[ |\mathcal{S} \cap \mathcal{A}_{\phi_0}(u,*,w)| \geq 2i.\]
    Then there exists an embedding of $T$ into $D$.
\end{lemma}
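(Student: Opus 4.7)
The plan is to extend $\phi_0$ to a full embedding $\phi_i$ of $T$ by inserting the missing edges $e_{n-i},\ldots,e_{n-1}$ one at a time, consuming one star from $\mathcal{S}$ per step. The idea is that to add an edge joining a new vertex $b$ to an already-embedded vertex $a$, I locate an unused absorbing star $S_v\in\mathcal{S}$ with $v\neq a$, re-embed its center $v$ to a previously unused vertex $w\in V(D)$, and use the vacated spot $\phi_0(v)$ as the image of $b$; the new tree edge between $a$ and $b$ is then realized by the edge of $D$ between the current image of $a$ and $\phi_0(v)$, which is provided by the absorbing condition.

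Formally, I would induct on $j\in\{0,1,\ldots,i\}$, constructing embeddings $\phi_j$ of the subtree $T_j$ with edge set $\{e_1,\ldots,e_{n-1-i+j}\}$; the base case is the given $\phi_0$. For the inductive step, validity of the edge ordering ensures that the new edge $e_{n-1-i+j}$ has exactly one endpoint $a$ in $V(T_{j-1})$ and introduces exactly one new vertex $b$. Set $u:=\phi_{j-1}(a)$ and let $*\in\{+,-\}$ match the direction of this edge at $a$. Choose any $w\in V(D)\setminus Im(\phi_{j-1})$, which exists because $|Im(\phi_{j-1})|=n-i+j-1<n$, and pick a star $S_v$ from $\mathcal{S}\cap\mathcal{A}_{\phi_0}(u,*,w)$ that has not been used in any previous step and does not have $a$ as its center. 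At most $j-1$ stars have already been used, and by vertex-disjointness of $\mathcal{S}$ at most one star is centered at $a$, so at least $2i-(j-1)-1\geq i\geq 1$ candidates remain.

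Now set $\phi_j(v):=w$, $\phi_j(b):=\phi_0(v)$, and $\phi_j(x):=\phi_{j-1}(x)$ for all other $x\in V(T_j)$; I need to verify this is an embedding. Because $S_v$ is unused and $\mathcal{S}$ is vertex-disjoint, no vertex of $V(S_v)$ was relocated in a previous step, so $\phi_{j-1}$ agrees with $\phi_0$ on $V(S_v)$ and in particular $S_v$ is still $(u,*,w)$-absorbing under $\phi_{j-1}$. Injectivity holds because $w\notin Im(\phi_{j-1})$ and the vertex $\phi_0(v)=\phi_{j-1}(v)$ is freed by the move of $v$. The new edge $e_{n-1-i+j}$ is realized via the edge between $u$ and $\phi_0(v)\in N_D^*(u)$; each star edge between $v$ and a leaf $y$ is realized by the edge of $D$ between $\phi_j(v)=w$ and $\phi_j(y)=\phi_0(y)$, which exists thanks to $\phi_0(S^\pm)\subseteq N_D^\pm(w)$; the condition $S^\pm=N_T^\pm(v)$ rules out any further $T$-edges incident to $v$; and all remaining edges of $T_j$ lie in $T_{j-1}$ and involve no vertex whose embedding has changed, so they are preserved.

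I expect the main subtlety to be the bookkeeping behind the counting argument: in the worst case (step $j=i$) we must simultaneously avoid all $i-1$ previously-used stars and the at most one star centered at $a$, which is exactly why the hypothesis asks for $2i$ absorbing stars per triple rather than just $i$. Everything else reduces to the routine verification that the \emph{center swap} operation preserves precisely the edges it needs to, which works out cleanly because the condition $S^\pm=N_T^\pm(v)$ guarantees that every $T$-edge touching $v$ is already accounted for by the absorbing star.
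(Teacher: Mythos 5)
Your proof is correct and takes essentially the same approach as the paper's: extend the embedding one edge at a time, using one fresh absorbing star per step with a counting argument to ensure enough stars remain. The only (cosmetic) difference is the bookkeeping — you always count stars in $\mathcal{A}_{\phi_0}$ and use vertex-disjointness to argue that unused stars remain absorbing under the current $\phi_{j-1}$, whereas the paper maintains an explicit invariant on $|(\mathcal{S}\setminus\mathcal{S}_j)\cap\mathcal{A}_{\phi_j}(u,*,w)|$ for the evolving embedding, removing (up to) two stars from consideration at each step.
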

\begin{proof}
    Let $V(D) \setminus Im(\phi_0) = \{x_1, \dots, x_i\}$ and for each $j \in [i]$, let $T_{j} = \{ e_1, \dots, e_{n-1 - i + j}\}$.
    We will inductively find embeddings $\phi_j: T_j \to D$ and subsets $\mathcal{S}_j \subseteq \mathcal{S}$ such that:
    \begin{enumerate}
        \item $Im(\phi_j) = Im(\phi_0) \cup \{x_1, \dots, x_j\}$,
        \item $|\mathcal{S}_j| \leq 2j$,
        \item $|(\mathcal{S}\setminus \mathcal{S}_j) \cap \mathcal{A}_{\phi_j}(u,*,w)| \geq 2(i-j)$ for all $u,w \in V(D), * \in \{+, -\}.$
    \end{enumerate}
    The final embedding will then be given by $\phi_i$.

    Clearly, these conditions hold for $\phi_0$.
    Let us then suppose that we already have $\phi_j$ and $\mathcal{S}_j$ for some $j$.
    Let then $e_{n - i + j} = (v_1, v_2)$ be the next edge to embed and suppose w.l.o.g.\footnotemark that $v_1 \in V(T_j)$.
    \footnotetext{In case $v_2 \in V(T_j)$ we would have to find a $(\phi_j(v_2), -, x_{j+1})$ absorbing star. The analysis is symmetric to the other case.}
    Let moreover $S_v \in (\mathcal{S}\setminus \mathcal{S}_j)$ be an $(\phi_j(v_1), +, x_{j+1})$-absorbing star under $\phi_j$,
    which exists by condition 3.
    We define $\phi_{j+1}$ as:
    \[\phi_{j+1}(v') = \begin{cases}
        x_{j+1} & v' = v \\
        \phi_j(v) & v' = v_2 \\
        \phi_j(v') & \text{otherwise}
    \end{cases}\]

    Note that $\phi_j$ is injective and condition 1 holds by construction.
    Moreover, for all $(u,*,w)$ any $S \in (\mathcal{S}\setminus \mathcal{S}_j) \cap \mathcal{A}_{\phi_j}(u,*,w)$ is
    \begin{itemize}
        \item equal to $S_v$,
        \item a star with center $v_1$ (if it exists), \emph{or}
        \item still $(u,*,w)$-absorbing under $\phi_{j+1}$.
    \end{itemize}
    
    Let $S_{v_1}$ be the star in $\mathcal{S}$ with center $v_1$ in case it exists (and let $S_{v_1}$ = $S_v$ otherwise).
    After setting $\mathcal{S}_{j+1} = \mathcal{S}_j \cup \{ S_v, S_{v_1}\}$ conditions 2 and 3 are therefore also satisfied.
\end{proof}

\section{Finding many absorbing stars}\label{section:manystars}
In this section we will show how to find a collection of stars $\mathcal{S}$ satisfying the conditions of Lemma \ref{lemma:absorbing}.
To that end, we fix some $0 < \epsilon < 1/3$ and let $n' = \lfloor (1- \epsilon)n \rfloor$.
Moreover, we let $e_1, \dots, e_{n-1}$ be a valid ordering of the edges of $T$ and $T' = \{ e_1, \dots, e_{n'-1} \}$ be a subtree of $T$ with $n'$ vertices.
We will first find a collection of many disjoint stars in $T'$ and then show that w.h.p. there are enough absorbing stars among them for any pair of vertices.

We start with finding a suitable collection $\mathcal{S}$ of many vertex disjoint stars in $T'$.

\begin{lemma}[Fixing many vertex disjoint stars in $T'$]\label{lemma:findingstars}
    Let $T'$ be an oriented tree on $n'$ vertices and with maximum total degree $\Delta$.
    There exists a subset $V' \subseteq V(T')$ of $|V'| \geq \frac{n'}{\Delta^2+1}$ vertices of $T'$ such that the collection 
    \[\mathcal{S}_0 := \{ (v, N^+_{T'}(v), N^-_{T'}(v)) | v\in V' \}\]
    is vertex-disjoint.
\end{lemma}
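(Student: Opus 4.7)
The plan is to argue greedily. The vertex set of the star $S_v = (v, N^+_{T'}(v), N^-_{T'}(v))$ is precisely the closed neighborhood $\{v\} \cup N_{T'}(v)$ in the underlying undirected tree, where $N_{T'}(v) = N^+_{T'}(v) \cup N^-_{T'}(v)$. Two such stars $S_{v_1}$ and $S_{v_2}$ are vertex-disjoint if and only if $v_1$ and $v_2$ are at distance at least $3$ in the underlying tree of $T'$: adjacency gives one vertex in the other's closed neighborhood, and distance $2$ gives a common neighbor. So the task reduces to finding a large set of pairwise distance-$\geq 3$ vertices in $T'$.

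First, I would run the following greedy procedure. Maintain a set $U \subseteq V(T')$ of "available" vertices, initialized to $V(T')$, and a set $V'$, initialized to $\emptyset$. While $U \neq \emptyset$, pick any $v \in U$, add $v$ to $V'$, and remove from $U$ every vertex at distance at most $2$ from $v$ in $T'$ (including $v$ itself). By construction, any two vertices added to $V'$ at different iterations are at distance at least $3$ in $T'$, since the later vertex was never removed by the earlier one.

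Next, I would bound from above the number of vertices removed in a single iteration. The $2$-ball around $v$ in $T'$ contains $v$, its at most $\Delta$ neighbors, and the at most $\Delta - 1$ additional neighbors of each of those, giving at most $1 + \Delta + \Delta(\Delta - 1) = \Delta^2 + 1$ vertices. Hence each iteration removes at most $\Delta^2 + 1$ vertices from $U$, and so the loop runs at least $\lceil n' / (\Delta^2 + 1)\rceil$ times, yielding $|V'| \geq n' / (\Delta^2 + 1)$.

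Finally, I would conclude by noting the reduction above: for $v_1 \neq v_2 \in V'$, we have $d_{T'}(v_1, v_2) \geq 3$, so $\{v_1\} \cup N_{T'}(v_1)$ and $\{v_2\} \cup N_{T'}(v_2)$ are disjoint, and therefore the collection $\mathcal{S}_0 = \{(v, N^+_{T'}(v), N^-_{T'}(v)) : v \in V'\}$ is vertex-disjoint. There is no real obstacle here; the only thing to be mildly careful about is that "maximum total degree $\Delta$" controls the undirected degree in $T'$, so the $2$-ball bound $\Delta^2 + 1$ is valid regardless of edge orientations.
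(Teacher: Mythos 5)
Your proof is correct and is essentially the same greedy argument as in the paper: both iterate over vertices, add a vertex whenever its closed neighborhood (equivalently, its star) is disjoint from everything chosen so far, and then bound the number of vertices that each chosen $v$ can "block" by the size of its ball of radius $2$, namely $\Delta^2+1$ including $v$ itself. Your phrasing via "remove the $2$-ball" versus the paper's "skip $v$ if $S_v$ conflicts with the current collection" is only a cosmetic difference, and the distance-$\geq 3$ characterization you use is implicit in the paper's observation that conflicting centers are at distance at most $2$.
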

\begin{proof}
    Let us write $S_v = (v, N^+_{T'}(v), N^-_{T'}(v))$ for $v \in V(T)$.
    We start with $\mathcal{S}_0 = \emptyset$ and process the vertices $V(T')$ one by one.
    Whenever for the current vertex $v$ the star $S_v$ is vertex-disjoint to all stars in the current set $\mathcal{S}$,
    we add it to $\mathcal{S}_0$.
    Note that if for $v, w \in V(T')$ the stars $S_v$ and $S_w$ are not vertex-disjoint then $v$ and $w$ must have distance at most $2$ in the underlying graph of $T'$.
    Therefore, for any $v \in V(T')$ there are at most $\Delta^2$ vertices $w$ so that the stars $S_v$ and $S_v$ would conflict.
    This means that at the end of the process we have $|\mathcal{S}_0| \geq \frac{n'}{\Delta^2+1}$.
\end{proof}

We will now show that under a uniformly random embedding of $T'$ into $D$ and for any pair of vertices enough of those stars are absorbing.

\begin{lemma}[There are enough absorbing stars for each pair]\label{lemma:enoughstars} 
    Let $n \in \mathbb{N}$, $0 < \alpha, \gamma \leq 1 $ and $\Delta > 0$. 
    There exists a constant $c_1 = c_1(\alpha, \gamma, \Delta)$ such that the following holds.
    Let $T'$ be a tree on $n/2 \leq n' \leq n$ vertices and with maximal total degree at most $\Delta$ and let $V \subseteq V(T')$ be a subset of its vertices of size $\gamma n$  such that $\mathcal{S} = \{ (v, N_{T'}^+(v), N_{T'}^-(v)) | v \in V\}$ is a set of vertex-disjoint stars in $T'$.
    Moreover, let $D_{\alpha}$ be an $n$-vertex digraph with minimum semidegree at least $\alpha n$ and $\phi: V(T') \to V(D_\alpha)$ be a uniformly random injective mapping of $V(T')$ into $V(D_\alpha)$.
    Then, w.h.p. for all $u,w \in V(D_\alpha)$ and $*\in \{+, - \}$ it holds that
    \[ |\mathcal{S} \cap \mathcal{A}_{\phi}(u, *, w)| \geq c_1 n. \]
\end{lemma}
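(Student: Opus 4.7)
Fix a triple $(u, w, *) \in V(D_\alpha) \times V(D_\alpha) \times \{+,-\}$ and set $X := |\mathcal{S} \cap \mathcal{A}_\phi(u,*,w)|$. The plan is to show that $X \geq c_1 n$ with probability at least $1 - o(1/n^2)$ for a suitable constant $c_1 = c_1(\alpha, \gamma, \Delta)$, and then take a union bound over the $2n^2$ such triples. Writing $X = \sum_{v \in V} X_v$, where $X_v$ is the indicator that $S_v = (v, N_{T'}^+(v), N_{T'}^-(v))$ is $(u,*,w)$-absorbing under $\phi$, this splits into two parts: lower bounding $\E[X]$ by a constant multiple of $n$, and establishing concentration of $X$ about its mean via Theorem \ref{thm:azuma} applied to a Doob martingale.

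For the expectation, fix any $v \in V$ and let $d := |N_{T'}^+(v)| + |N_{T'}^-(v)| \leq \Delta$. Using Remark \ref{remark:uniform}, we may reveal $\phi$ on the $d + 1$ vertices of $S_v$ one at a time: first at $v$, then at each in- or out-neighbor. At each step the relevant target set ($N_D^*(u)$, $N_D^+(w)$ or $N_D^-(w)$) has size at least $\alpha n$, while at most $\Delta + 1$ vertices of $V(D_\alpha)$ are forbidden because they have already been used; so for $n$ large enough in terms of $\alpha$ and $\Delta$, every conditional probability is at least $\alpha/2$. Hence $\Pr[X_v = 1] \geq (\alpha/2)^{\Delta+1}$, and by linearity $\E[X] \geq c_2 n$ with $c_2 := \gamma (\alpha/2)^{\Delta+1}$.

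For concentration, enumerate $V(T') = \{v_1, \ldots, v_{n'}\}$ and consider the Doob martingale $M_t := \E[X \mid \phi(v_1), \ldots, \phi(v_t)]$, so $M_0 = \E[X]$ and $M_{n'} = X$. We claim $|M_t - M_{t-1}| \leq 2$. Since the stars in $\mathcal{S}$ are vertex-disjoint, each vertex of $T'$ lies in at most one of them. Given any two values $a, b$ that $\phi(v_t)$ could take consistently with the first $t-1$ exposures, the conditional distributions of the remaining values of $\phi$ can be coupled by the swap that exchanges the roles of $a$ and $b$: this modifies the image of exactly two vertices of $T'$, each contributing to at most one indicator $X_v$, so the two coupled copies of $X$ differ by at most $2$. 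Applying Theorem \ref{thm:azuma} with $L = 2$ and $N = n' \leq n$, and setting $c_1 := c_2 / 2$, we obtain
\[ \Pr[X < c_1 n] \;\leq\; \Pr\bigl[|X - \E[X]| \geq c_2 n / 2\bigr] \;\leq\; 2 \exp\!\left(-\frac{c_2^{\,2}\, n}{32}\right) \;=\; o(1/n^2), \]
and a union bound over the $2n^2$ triples $(u, w, *)$ finishes the proof.

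The main obstacle is the bounded-difference step in the martingale: because $\phi$ is a uniformly random injection rather than a vector of independent random variables, the differences $|M_t - M_{t-1}|$ cannot be read off directly and must be controlled via the coupling sketched above. The crucial input from our setup is the vertex-disjointness of the stars in $\mathcal{S}$, which guarantees that any single swap affects at most two indicators and thereby keeps $L$ bounded by a universal constant independent of $\Delta$ and $n$; this is exactly what allows the Azuma tail bound to dominate the quadratic union bound over pairs $(u, w)$.
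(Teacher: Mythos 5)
Your proof is correct, and it takes a genuinely different route from the paper's. The paper uses a star-exposure martingale: it reveals $\phi$ one whole star at a time, restricts attention to the first $N = \min\{\gamma n, \alpha n/(6(\Delta+1))\}$ stars, and works with \emph{pairwise disjoint} subsets $N^*(u), N^+(w), N^-(w)$ of the relevant neighborhoods, each of size $\alpha n/3$. The disjointness and the cap on $N$ are needed so that the falling-factorial formulas for $\E[X_i \mid \phi_{k-1}]$ and $\E[X_i \mid \phi_k]$ stay controlled; the bounded-difference constant (of the form $1 + c_4$ with $c_4 = c_4(\alpha, \Delta)$) then comes from showing that revealing one star of at most $\Delta+1$ vertices changes each of the remaining conditional expectations by $O(1/n)$. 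You instead reveal $\phi$ one vertex of $T'$ at a time and bound the increments with a swap coupling: conditioning on $\phi(v_t) = a$ versus $\phi(v_t) = b$ gives two conditional laws of the remaining map that can be coupled so as to differ in the images of at most two vertices of $T'$, and vertex-disjointness of $\mathcal{S}$ converts that into a change of at most two indicators, yielding a universal Lipschitz constant $L = 2$. This buys a cleaner and more elementary argument — no truncation of $\mathcal{S}$ to $N$ stars, no disjoint-subset preprocessing of the neighborhoods, and a simpler expectation bound obtained by revealing the $\leq \Delta+1$ star vertices one by one — at the cost of having to justify the coupling carefully, which you correctly single out as the step needing the most care. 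Both arguments share the same skeleton (Doob martingale, Azuma--Hoeffding, union bound over the $2n^2$ triples $(u,w,*)$) and yield the same conclusion.
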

\begin{proof}
    Let $N := \min \{\gamma n, \frac{\alpha n}{6(\Delta+1)}\}$. It will be enough to consider only the first $N$ stars from $\mathcal{S}$, so assume $|\mathcal{S}|=N$.
    Let us fix $u, w \in V(D_\alpha)$ and $* \in \{+, -\}$.
    To make our analysis simpler, we will restrict ourselves to arbitrary subsets of the neighborhoods of $u$ and $w$ such that there are no overlaps between them.
    We pick pairwise disjoint $N^*(u) \subseteq N_{D_\alpha}^*(u)$, $N^+(w) \subseteq N_{D_\alpha}^+(w)$ and $N^-(w) \subseteq N_{D_\alpha}^-(w)$ of size $\alpha' n = \alpha n/3 $ each and say that a star $S = (v, N_{T'}^+(v), N_{T'}^-(v)) \in \mathcal{S}$ is \emph{good} if 
    \begin{itemize}
        \item $\phi(v) \in N^*(u)$,
        \item $\phi(N_{T'}^+(v)) \subseteq N^+(w)$, \emph{and}
        \item $\phi(N_{T'}^-(v)) \subseteq N^-(w).$
    \end{itemize}
    Clearly, if $S$ is good then it is also $(u, *, w)$-absorbing under $\phi$.

    For $S \in \mathcal{S}$, we define $X_S$ as the indicator variable for the event that $S$ is good and let $X = \sum_{S \in \mathcal{S}} X_S$ be the number of good stars in $\mathcal{S}$.

    \vspace{10pt}
    
    \begin{claim}
        There exists a constant $c_2 =  c_2(\alpha, \gamma, \Delta)$ such that $\E[X] \geq  c_2n$.
    \end{claim}
    \begin{proof}
        Let $S = (v, N_T^+(v), N_T^-(v)) \in \mathcal{S}$, and let  $ s^+ = |N_{T'}^+(v)| $, $s^- = |N_{T'}^-(v)|$ and $s = s^+ + s^- + 1 \leq \Delta + 1$.
        Then we can choose a constant $c'_2 = c'_2(\alpha, \Delta)$ such that for all possible choices of $s^+$ and of $s^-$ it holds that
        \[ \Pr[S \text{ is good}] = \frac{\alpha'n (\alpha'n)^{\underline{s^+}}(\alpha'n)^{\underline{s^-}}}{n^{\underline{s}}} \geq c'_2.\]
        By linearity of expectation we then get the claim.
   \end{proof}
    \vspace{10pt}

    What remains to be shown is that $X$ is concentrated around its mean.
    To achieve that, we will expose the positions of the stars $\mathcal{S}$ in our host graph $D_\alpha$ one by one to define a star-exposure martingale.
    Then we will show that exposing an additional star cannot change the expected number of good stars by much.

    Let therefore $\mathcal{S} = \{S_1, \dots, S_N \}$ be an arbitrary enumeration of the stars in $\mathcal{S}$, and denote $V_i = \bigcup_{j=1}^i V(S_j)$ and $X_i = X_{S_i}$.
    Moreover, let $\phi_i = \phi|_{V_i}$ be the restriction of $\phi$ to the first $i$ stars and:
    \begin{itemize}
        \item $M_0 = \E[X]$
        \item $M_i = \E[X | \phi_i]$ for $1 \leq i \leq N.$
    \end{itemize}
    Clearly, $(M_i)_{i \geq 0}$ is a martingale and $M_N = X$.

    \vspace{10pt}
    
    \begin{claim}
        There exists a constant $c_3 = c_3(\alpha, \Delta)$ s.t. $|M_k - M_{k-1}| \leq c_3$ almost surely for every $k \geq 1$.
    \end{claim}
    \begin{proof}
        Since we're dealing with a finite probability space, it suffices to consider $|M_k - M_{k-1}|$ for any possible embedding $\phi$.
        Let us therefore fix an embedding $\phi$ of the stars in $\mathcal{S}$ into $D_{\alpha}$.
        Then,
        \begin{itemize}
            \item $M_{k-1} = \E[X | \phi_{k-1}] = \sum_{i=0}^{k-1}\E[X_i | \phi_{k-1}] + \E[X_k | \phi_{k-1}] + \sum_{i=k+1}^{N}  \E[X_i | \phi_{k-1}]$
            \item $M_{k} = \E[X | \phi_{k}] = \sum_{i=0}^{k-1}\E[X_i | \phi_{k}] + \E[X_k | \phi_{k}] + \sum_{i=k+1}^{N}  \E[X_i | \phi_{k}].$
        \end{itemize}
        Therefore, $|M_{k} - M_{k-1}| \leq 1 + \sum_{i=k+1}^{N}|\E[X_i | \phi_{k-1}] - \E[X_i| \phi_{k}]|$.
        
        Note that since $\phi$ is a uniformly random injective mapping,
        $\E[X_i | \phi_{k-1}]$ only depends on the sizes of the respective parts $N^*(u), N^+(w), N^-(w)$ and $V(D_{\alpha})$ that are not used by $\phi_{k-1}$.
        Let therefore $a^*$, $a^+$,  $a^-$ and $b$ be such that:
        
        \begin{itemize}
            \item $|N^*(u)\setminus Im(\phi_{k-1})| = a^*n$
            \item $|N^+(w)\setminus Im(\phi_{k-1})| = a^+n$
            \item $|N^-(w)\setminus Im(\phi_{k-1})| = a^-n$ and
            \item $|V(D_{\alpha})\setminus Im(\phi_{k-1})| = b n.$
        \end{itemize}
        Note that by our choice of $|\mathcal{S}|$, we have that $a^*, a^+, a^- \in [\alpha/6, \alpha/3]$ and $1/2 \leq b \leq 1$.

        For $i \geq k+1$, let $S_i = (v_i, S^+, S^-)$, and let $ s^+ = |S^+|$, $ s^-=|S^-|$ and $s = s^+ + s^- + 1 \leq \Delta + 1$. 
        Then, it holds that:
        \[\E[X_i | \phi_{k-1}] = \frac{a^*n (a^+n)^{\underline{s^+}}(a^-n)^{\underline{s^-}}}{(b n)^{\underline{s}}} \]
        and since $|V(S_k)| \leq \Delta + 1$ we also have
        \[\frac{(a^*n - \Delta - 1)(a^+n- \Delta - 1)^{\underline{s^+}}(a^-n- \Delta - 1)^{\underline{s^-}}}{(b n)^{\underline{s}}} \leq \E[X_i | \phi_{k}] \leq \frac{a^*n (a^+n)^{\underline{s^+}}(a^-n)^{\underline{s^-}}}{(b n - \Delta - 1)^{\underline{s}}}.\]
        Thus, there exists a constant $c_4 = c_4(\alpha, \Delta)$ such that for all possible choices of $s$'s, $a$'s and $b$,
        \[|\E[X_i | \phi_{k-1}] - \E[X_i| \phi_{k}]| \leq c_4n^{-1}. \]
        Since $N \leq n$, we get $|M_{k} - M_{k-1}| \leq  1 + c_4$.
    \end{proof}
    \vspace{10pt}

    By the Azuma-Hoeffding inequality (Theorem~\ref{thm:azuma}) we consequently get that there exists a constant $c_5 = c_5(\alpha, \Delta)$ such that:
    \[ \Pr[|X - \E[X]| \geq \E[X]/2] = \Pr[|M_0 - M_N| \geq \E[X]/2] \leq 2 \exp(\frac{-\E[X]^2}{8N c_3^2}) \leq  \exp(-c_5n).\]
    Taking a union bound over all possible choices of $u, w$ and $*$ yields the lemma with $c_1 = c_2/2$. 
\end{proof}

\section{Proof of the main result}
\label{section:proofofmain}

\begin{proof}[Proof of Theorem \ref{theorem:maintheorem}]
Let $\gamma := \frac{1}{2(\Delta^2 + 1)}$, let $c_1 = c_1(\alpha, \gamma, \Delta)$ be the constant from Lemma \ref{lemma:enoughstars}, and $\epsilon < c_1 /2$. 
Let $D_\alpha$ be a digraph on $n$ vertices with minimum semidegree at least $\alpha n$ and $T$ be an oriented tree on $n$ vertices and maximum total degree at most $\Delta$.
We let $e_1, \dots, e_{n-1}$ be a valid ordering of the edges of $T$ and $T' = \{ e_1, \dots, e_{(1 -\epsilon) n-1}\}$ be a subtree of $T$ on $(1 -\epsilon) n$ vertices. 

Now, by Lemma \ref{lemma:almostembedding} there is a constant $c = c(\epsilon, \Delta)$ such that the random digraph $R \sim D(n, c/n)$, and thus also $D = D_\alpha \cup R$, w.h.p. contains $T'$.
Let $\phi' : V(T') \to V(D)$ be an embedding of $T'$ into $D$ which uses only the edges of $R$.
As remarked before, we can assume that $\phi$ is a uniformly random injective mapping from $V(T')$ to $V(D)$.

By Lemma \ref{lemma:findingstars}, we can find $V \subseteq V(T')$ of size $\gamma n$ such that $\mathcal{S} = \{ (v, N_{T'}^+(v), N_{T'}^-(v)) | v \in V\}$ is a set of vertex-disjoint stars in $T'$.
Using Lemma \ref{lemma:enoughstars}, we moreover get that w.h.p. for all $u, w \in V(D)$ and $* \in \{ +, - \}$ there are at least $c_1 n$ many $(u, *, w)$-absorbing stars in $\mathcal{S}$ under $\phi'$.
By Lemma \ref{lemma:absorbing}, since $\epsilon n < 2 c_1 n$, we can therefore extend $\phi'$ to an embedding $\phi: V(T) \to V(D)$ of $T$ into $D$, which shows that w.h.p. $T \subseteq D$. 
\end{proof}

\bibliographystyle{plain}
\bibliography{refs}

\end{document}